\documentclass[11pt]{article}

\usepackage{comment}
\usepackage{geometry}                % See geometry.pdf to learn the layout options. There are lots.
\pdfoutput=1
\usepackage{graphicx}
\usepackage{hyperref}
\usepackage{amssymb}
\usepackage{amsmath}
\usepackage{amsthm}
\usepackage{amsfonts}
\usepackage[export]{adjustbox}
\usepackage{tikz}
\usetikzlibrary{calc, shapes, backgrounds}
\usepackage{caption}
\usepackage{subcaption}
\newcommand{\BE}{\begin{equation}}

\newcommand{\EE}{\end{equation}}

\newtheorem{theorem}{Theorem}
\newtheorem{definition}{Definition}
\newtheorem{lemma}{Lemma}

\usepackage[T1]{fontenc}
\title{Counting LEGO configurations.}
\author{Alexander Gunning\\
\small Melbourne, Australia\\
\small\tt rednaxela.backwards@gmail.com\\
Anthony J Guttmann\\
\small School of Mathematics and Statistics,\\
\small The University of Melbourne\\
\small Vic. 3010, Australia\\
\small\tt tony.guttmann@gmail.com\\ and\\
Rasmus M Nilsson\\
\small Copenhagen, Denmark\\
\small\tt fudling@gmail.com}

\begin{document}

\maketitle
\begin{abstract}
We discuss the problem of counting certain LEGO structures, primarily those comprising parallel $w \times 1$ tiles. These can be combined, as a  single LEGO structure, by interlocking the tiles. %Alternatively, if the interlocking condition is relaxed, so that tiles can also be placed end-to-end, a greater number of possible configurations results. 
We also study the historically earlier problem of counting the number of ways to combine $2 \times 4$ LEGO tiles, which in this case gives a 3-dimensional structure.

In all cases the number of configurations is dominated by an exponential growth term, $\mu^n$ where $n$ is the number of tiles. We present an algorithm for counting these various LEGO configurations, and use the data to estimate the asymptotics.

We analyse the data so generated, and conjecture that, for the two-dimensional structures, the number of possible configurations grows like $A(w)\mu(w)^n/n,$ and we give numerical estimates for $A(w)$ and $\mu(w)$  for $w < 11,$ while for the three-dimensional structure the number of possible configurations is conjectured to grow like $A\mu^n/n^{3/2},$ where $\mu = 117.25 \pm 0.05.$

We also study the sequences that arise when we fix the number of tiles $n,$ and vary the tile size $w.$ We prove that the sequences are polynomials of degree $n-1,$ and we give these explicitly for $n=1 \ldots 14.$

\end{abstract}

%\noindent {\bf PACS}: 05.50.+q, 05.10.-a, 02.60.Gf

%\noindent 
%{\bf MSC}: 05A15,  30B10, 82B20, 82B27, 82B41

%\noindent
%{\bf Keywords:} Gerrymander sequence, exact enumeration algorithms,  power-series expansions, asymptotic series analysis  

\section{Introduction}
One of the oldest LEGO counting problems goes back to a LEGO company newsletter of 1974 \cite{C74},
in which it was stated that the number of ways to combine six $2 \times 4$ LEGO tiles of the same colour is 102981500.
This number is not correct however, as it only counts the number of ways to build a structure of height 6, whereas other structures of lesser height can also be constructed from 6 tiles. 

In 2005 Durhuus and Eilers \cite{DE05} reported the correct number, being 915103765\footnote{This result was also independently obtained by Mikkel Abrahamsen.}. They showed, further, that the number of such objects grows exponentially like $\mu^n$ with the number of blocks $n$, and that the growth constant $78.32 \le \mu \le 176.58.$ Note that the structures being counted do not require the tiles to be parallel. Rather, they are counted up to translation and rotation, and restricted to having all blocks oriented north-south or east-west. That is to say, while it is possible to construct interlocking objects with blocks not necessarily parallel or perpendicular to one another, these are excluded. The first 10 coefficients are given in the OEIS \cite{OEIS} as sequence A112389. 

In 2011 Abrahamsen and Eilers \cite{AE11} considered this problem more generally, obtained a Monte Carlo estimate $\mu \approx 117$ for the growth constant in the above case, and conjectured that the number of such objects grows as $const. \mu^n \cdot n^{-3/2}.$ Note that these structures are three-dimensional. Here, we have analysed the sequence and estimated the growth constant to be $\mu = 117.25 \pm 0.05,$ in excellent agreement with the Monte Carlo estimate, and also provide supporting evidence that the exponent is $-3/2,$ as conjectured.

In 2016 one of us (RN) \cite{N16} studied {\em flat} LEGO structures, which are essentially two-dimensional objects. He considered the number of {\em interlocking} structures one could make with $w \times 1$ LEGO tiles, constrained so that all the tiles are parallel. 
%, with $w=2,\, 3,\, 4,\, 5$ 
These were counted up to size 25 for $w=2,$ and the counts are given in the OEIS \cite{OEIS} as sequence A319156. The growth constant was estimated as $\mu=5.203,$ and it was suggested that these structures grow as $const. \mu^n/n.$

We have written a computer program that allows us to find four further terms for the $2 \times 1$ LEGO problem just discussed, and to generate data for $w \times 1$ flat structures for $w \le 10$ (the sequences for $w > 2$ are not yet in the OEIS \cite{OEIS}).
%, as well as generalisations of these structures that allow them to be placed end-to-end, that is, not necessarily interlocking. 

We have analysed the data and are able to provide  precise (conjectural) estimates of the asymptotic growth for all structures of size $w \le 10.$ 

Furthermore, by considering how these structures grow, not by the number of tiles of a given size $w$ but by the size of the tiles for a given number of tiles $n$, we can prove that this gives rise to polynomials of degree $n-1,$ and we give the first 14 such polynomials, which therefore gives the first 14 coefficients for {\em any} size tiles $w \times 1.$

In the next section we briefly discuss our enumeration algorithm, and in subsequent sections we describe our methods of analysis. These are given in detail for the case of $2 \times 1$ structures, and then applied {\em mutatis mutandis} to other values of $w$. In those cases we simply quote the results. All our results are summarised in the conclusion.

\section{Enumeration program}
We have developed an algorithm for enumerating flat LEGO structures based on Jensen's transfer-matrix algorithm for enumerating  polyominoes \cite{EJ09}.
We can view flat LEGO structures as a special subset of polyominoes by considering each flat LEGO tile of length $w$ as being composed of $w$ adjacent cells. 
%As most polyominoes do not correspond to a LEGO structure, a number of obvious changes and optimizations must be made in order to effectively enumerate the LEGO structures.
\newline
The algorithm is described in detail in \cite{N16} so we will just present a short description here:
\newline
Counting all the LEGO structures one at a time takes too long so the algorithm doesn't construct them directly. Instead it utilizes bottom-up dynamic programming, where efficient (re)use of recurring subproblems is facilitated through extensive memory use. Given a number of blocks $n$ of width $w$ the program considers the 2D space of size $1+n\,(w-1)\times n$ in which all the LEGO structures can fit and then scans the space one cell at a time, left to right and bottom to top, while maintaining just enough information about every possible LEGO structure that can be constructed within the boundary of the scanned cells. The structures are not stored in memory directly but are instead continuously collapsed into equivalence classes defined by the topology of the structures at that boundary. For each of these boundary configurations we store the number of ways it can be reached for different numbers of blocks. In that way we 'build' all the structures at the same time, dealing only with boundary configurations instead of individual structures.
\newline
We also get a significant speed boost by continuously pruning away any boundary configuration that can be seen to not end up contributing to the number of connected structures. This is pretty straightforward since we already need to keep track of how the different parts of the boundary configurations are connected.
\newline
\newline
Utilising this program, we obtained data for $w \times 1$ structures for $w \in [2,10],$
and this data is given in Appendix A.

\section {$2 \times 1$ tiles}
Here we consider the number of ways to build a contiguous structure with $n$ LEGO blocks of size $2 \times 1$ which is flat, i.e., with all blocks in a parallel position. Recall that LEGO structures comprise interlocking blocks, so two blocks placed end-to-end don't count, as they are not interlocked. For clarity, we show in Fig. \ref{fig:1} all 11 structures of three such tiles.
\begin{figure}
    \centering
    \includegraphics[width=1\linewidth]{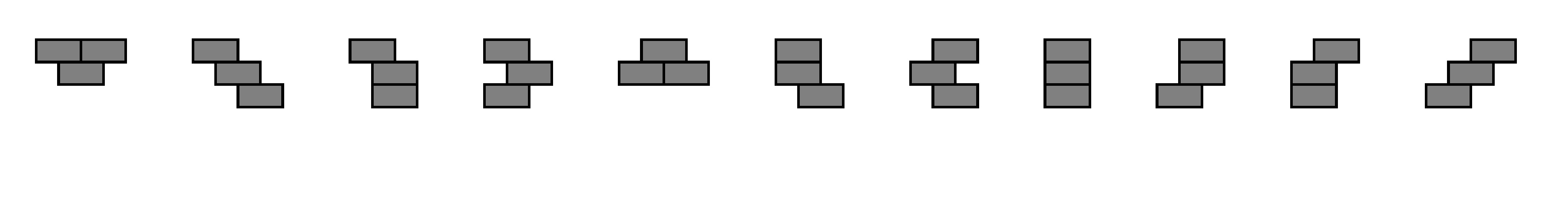}
    \caption{11 orientations of three $2 \times 1$ tiles}
    \label{fig:1}
\end{figure}

The first 25 coefficients are given in the OEIS as the series A319156, first obtained by Nilsson \cite{N16} in his Master's thesis. In Table \ref{tab:1a} we give 4 further coefficients. 

\subsection{Predicting coefficients}
In all cases we have used {\em series extension} to predict further terms (approximately) from the known terms, using differential approximants,  so as to obtain a longer series for analysis. The full theory of series extension is given in \cite{G16}. 

As a demonstration of the effectiveness of this method, assume that we actually only know the first 20 terms in the $2 \times 1$ case, and let us use these to predict the next 9 terms. The results are shown in Table \ref{tab:2}, where it can be seen that the first predicted coefficient is given with an accuracy of more than 8 significant digits. The error increases more or less steadily with order, but even so the worst case error is in the seventh significant digit. This level of precision is adequate for simple ratio plots, as used to estimate the growth constant, and we will use the extended data in each case.

The predicted coefficients are an average over many differential approximants. Based on  many numerical experiments, we take 3 standard deviations as an estimate of the error in the predicted coefficients. This then allows us to estimate how many predicted terms to use in the case when the predicted terms are unknown. We require the standard deviation to be no greater than $3 \times 10^{-6}$ the size of the predicted coefficient.
\begin{table}
    \centering
    \begin{tabular}{|c|c|c|}
    \hline
    Actual coefficient & Predicted coefficient& Fractional error\\
   \hline
 13378627003520  & 13378627126073&9.7E-9 \\
 66495716465315  &66495718454582 & 3.0E-8\\
 331167284581601  &331167311912757 &8.2E-8 \\
 1652340114446553  &1652340414988523 &1.8E-7 \\
 8258197397705302  & 8258200472923852&3.7E-7 \\
 41337852343827210  & 41337882727160076&7.4E-7 \\
 207222462319935608  & 207222762341920980&1.4E-6 \\
 1040176220193951923  &1040179261861774903 & 2.9E-6\\
 5227785863956950802 &5227792460836921059 & 1.3E-6\\
 \hline
    \end{tabular}
    \caption{Coefficient prediction of last 9 terms, $2 \times 1$ tiles.}
    \label{tab:2}
\end{table}

We show in some detail how we analyse this series, but for other values of $w$ we will just quote the results, obtained by applying the same methods of analysis. 

From sub-additivity and Fekete's lemma \cite{F23} it follows that the number, $a_n$ of such structures of size $n$ grows exponentially, and we denote the growth constant by $\mu.$ In the absence of any evidence to the contrary, we tacitly assume the usual power-law dependence, that is $a_n \sim A\mu^n\cdot n^g,$ where $g$ is the {\em critical exponent,} and $A$ is the {\em critical amplitude.} While it is provable that $\lim_{n \to \infty} a_n^{1/n} \to \mu,$ we make the stronger assumption, unproved but undoubtedly true, that the ratios $r_n = a_n/a_{n-1}$ also have $\mu$ as their limiting value.

Indeed, it follows from the assumed power-law form that $$r_n = \mu \left (1 + \frac{g}{n} + O(1/n^2) \right ),$$ and it is this behaviour that lies at the heart of the ratio method of series analysis. Strictly speaking one should write the correction term as $o(1/n),$ but if one has a pure power-law, it is the case that the correction terms are given by increasing powers of $1/n.$ From this expression for the ratios, it follows that plotting the ratios against $1/n$ should, for sufficiently large values of $n,$ result in a straight line with gradient $g\mu,$ and intercept at $1/n=0$ of $\mu.$ 

We show this plot in Fig. \ref{fig:r1}, and it can be seen to be going to an intercept around 5.20. We can improve on this by calculating the so-called {\em linear intercepts,} which effectively eliminates the term of order $g/n$ in the above expression for the ratios. 
We define the linear intercepts as $$l_n=n\cdot r_n - (n-1) \cdot r_{n-1} = \mu(1+O(1/n^2)).$$
In Fig \ref{fig:r2} we show the linear intercepts plotted against $1/n^2,$ which can be seen to be approaching a limit very close to 5.203.

\begin{figure}[ht!] 
\begin{minipage}[t]{0.45\textwidth} 
\centerline {\includegraphics[width=\textwidth]{ 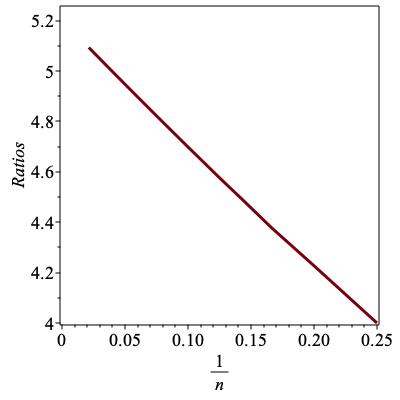}}
\caption{Ratios plotted against $1/n.$} 
\label{fig:r1}
\end{minipage}
\hspace{0.05\textwidth}
\begin{minipage}[t]{0.45\textwidth} 
\centerline{\includegraphics[width=\textwidth]{ 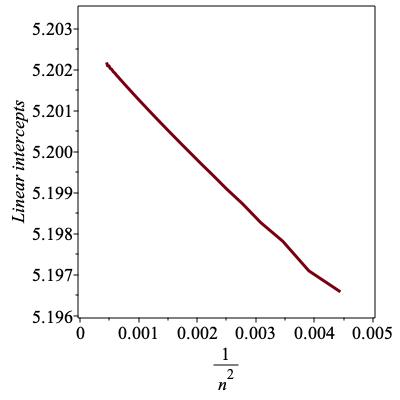}}
\caption{Linear intercepts plotted against $1/n^2.$} 
\label{fig:r2}
\end{minipage}
\end{figure}

We can refine this further if we know the value of the critical exponent $g,$ and this can be estimated from the expression given above for the ratios. Reverting that expression, one obtains:
$$g_n=n\left ( \frac{r_n}{\mu} - 1 \right) = g(1+O(1/n)).$$

\begin{figure}[ht!] 
\begin{minipage}[t]{0.45\textwidth} 
\centerline {\includegraphics[width=\textwidth]{ 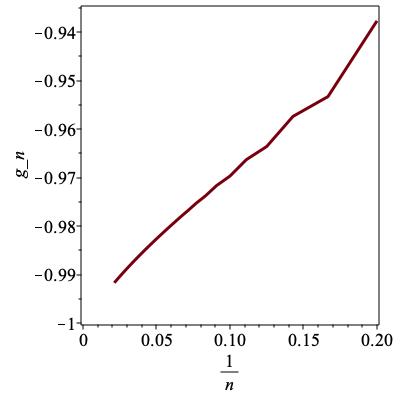}}
\caption{Exponent estimates $g_n$ plotted against $1/n.$} 
\label{fig:g1}
\end{minipage}
\hspace{0.05\textwidth}
\begin{minipage}[t]{0.45\textwidth} 
\centerline{\includegraphics[width=\textwidth]{ 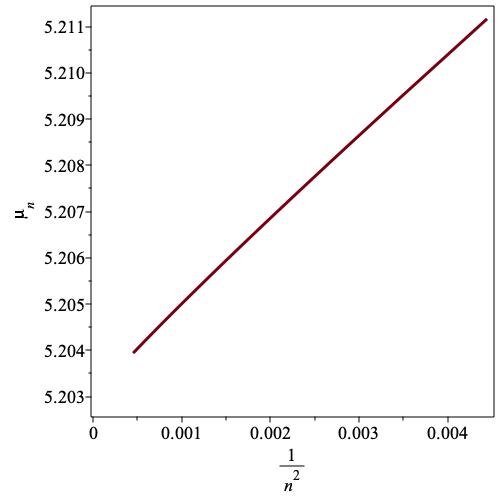}}
\caption{Refined estimate of $\mu$ plotted against $1/n^2.$} 
\label{fig:r3}
\end{minipage}
\end{figure}

In Fig. \ref{fig:g1} we show the exponent estimates, constructed assuming $\mu=5.203,$ and it is clear that the plot is
going to a limit $g \approx -1.$ If, as we conjecture, $g=-1$ then this corresponds to a logarithmic singularity of the generating function, which is not surprising as that is what is conjectured for two-dimensional polyominoes \cite{G09}, and these structures are a subset of polyominoes. If we assume that $g=-1,$ it follows from the expression for the ratios that
$$ \frac{n\cdot r_n}{n-1} =\mu + O(1/n^2).$$ This refined estimate of $\mu$ plotted against $1/n^2$ is shown in Fig. \ref{fig:r3}, and is also consistent with our earlier estimate of $$ \mu \approx 5.203.$$

We can eliminate the term $O(1/n^2)$ from this refined estimate, just as we eliminated the term $O(1/n)$ from the original ratios. We do this by calculating
$$\frac{n^3 r_n}{(n-1)(2n-1)} - \frac{(n-1)^3 r_{n-1}}{(n-2)(2n-1)} =\mu+O(1/n^3).$$ This quantity is shown plotted against $1/n^3$ in Fig \ref{fig:r4}, and we can now refine our estimate of the growth constant to $\mu \approx 5.2030.$

To go further, we assume that $$r_n \approx \mu\left (1 -\frac{1}{n} +\frac{c_1}{n^2} +\frac{c_2}{n^3}\right ).$$ There are three unknowns on the r.h.s., $\mu,$ $c_1$ and $c_2.$ We fit successive triples of coefficients, $a_{k-1},\, a_k,\, a_{k+1}$ to this assumed form, increasing $k$ until we run out of terms, thereby yielding estimates of the three unknowns. In Fig. \ref{fig:r5} we show the resulting estimate of $\mu,$ and this allows us to make the more refined estimate, $\mu = 5.20300 \pm 0.00005.$ 

\begin{figure}[ht!] 
\begin{minipage}[t]{0.45\textwidth} 
\centerline {\includegraphics[width=\textwidth]{ 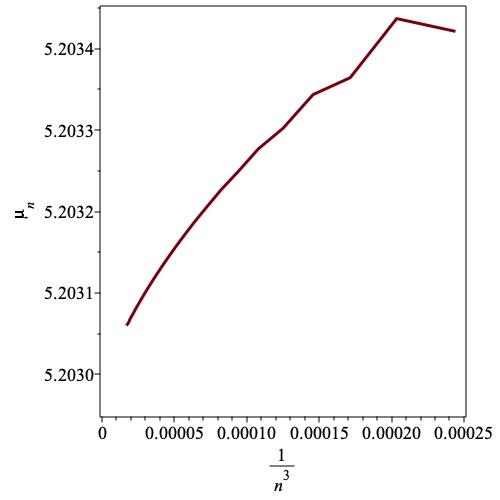}}
\caption{Refined estimate of $\mu$ plotted against $1/n^3.$} 
\label{fig:r4}
\end{minipage}
\hspace{0.05\textwidth}
\begin{minipage}[t]{0.45\textwidth} 
\centerline{\includegraphics[width=\textwidth]{ 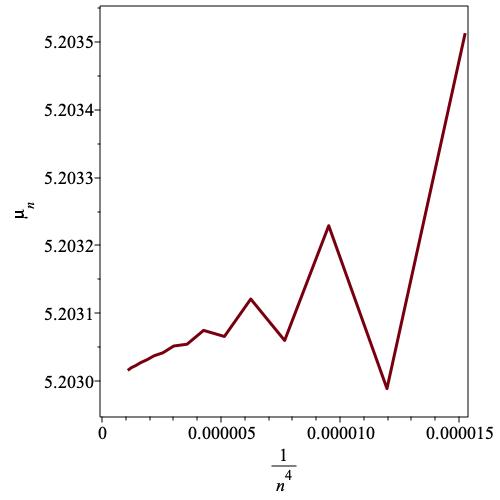}}
\caption{Refined estimate of $\mu$ plotted against $1/n^4.$} 
\label{fig:r5}
\end{minipage}
\end{figure}
We can further improve on this by fitting four successive terms to $$r_n \approx \mu\left (1 -\frac{1}{n} +\frac{c_1}{n^2} +\frac{c_2}{n^3}+\frac{c_3}{n^4}\right ).$$
The result is shown in Fig. \ref{fig:r6}, and this allows us to make our final estimate, $\mu = 5.203000 \pm 0.000005.$ 
Finally, in Fig \ref{fig:cmu} we show estimators of $c_1,$  which seems to be around  $c_1 \approx 0.41.$ These estimators follow from the expression for the ratios, so that
$$c_1 \sim \left ( \frac{r_n}{\mu}-1+\frac{1}{n} \right )n^2.$$
\begin{figure}[ht!] 
\begin{minipage}[t]{0.45\textwidth} 
\centerline {\includegraphics[width=\textwidth]{ 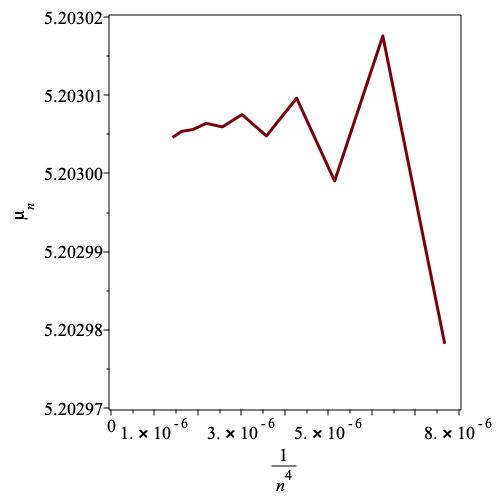}}
\caption{Refined estimate of $\mu$ plotted against $1/n^4.$} 
\label{fig:r6}
\end{minipage}
\hspace{0.05\textwidth}
\begin{minipage}[t]{0.45\textwidth} 
\centerline{\includegraphics[width=\textwidth]{ 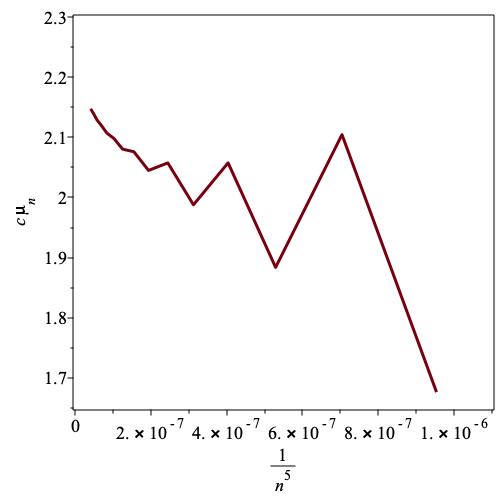}}
\caption{Estimators of $c_1\cdot \mu$ plotted against $1/n^5.$} 
\label{fig:cmu}
\end{minipage}
\end{figure}

Thus we find that the ratios behave as

$$r_n = \mu\left (1 -\frac{1}{n} + \frac{c_1}{n^2} + O(n^{-3})\right ),$$
where $c_1\approx 0.41.$ 
It follows that the generating function
$$A(x)=\sum_n a_n x^n  \sim \frac{C}{x}\log(1-\mu \cdot x),$$ where $C = -0.26022 \pm 0.00005.$ We estimated $C$ by extrapolating the sequence $\frac{a_n}{\mu(1-1/n)},$ and making the necessary transformation from coefficient amplitude to generating function amplitude.

Incidentally, if one tests the available data for log-convexity, one finds that the sequence $a_{n-1}a_{n+1}-a_n^2$ is both positive and (rapidly) monotonically increasing. Assuming that this property persists, it follows that $a_n/a_{n-1}$ gives an increasing sequence of lower bounds on the growth constant. The available coefficients then give the (non-rigorous) lower bound $\mu > 5.0196.$ This property (log-convexity) appears to be true for all the LEGO sequences considered in this article.

\section {$w \times 1$ tiles}
As mentioned above, we have also enumerated the number of LEGO structures comprised of $w \times 1$ tiles, again with all tiles oriented parallel, for $w = 3,\cdots,10.$
The counts are shown in Table \ref{tab:1a}, and we have analysed the corresponding series in exactly the same way as described above for structures comprising $2 \times 1$ tiles. 

The generating function behaves similarly in all cases, with a logarithmic singularity, and a growth constant that grows roughly linearly with $w$. We will say more about this subsequently.
More precisely, for $w=3$, we estimate
$\mu = 8.84035 \pm 0.00005,$ and $g=-1.$ The ratios behave as,
$$r_n = \mu\left (1 -\frac{1}{n} + \frac{c}{n^2} + O(n^{-3})\right ),$$
where $c\approx 0.43,$ which is slightly higher than the value found for the $2 \times 1$ case. For the generating function,
we estimate
$$A(x) =\sum_n a_n x^n \sim C\log(1-\mu \cdot x)/x,$$ where $C = -0.1485 \pm 0.0005.$

Again, assuming log-convexity, which is well-supported by the data, one obtains the (non-rigorous) lower bound $\mu > 8.426.$

A similar analysis for $w\ge 4$ gave the results shown Table \ref{tab:2a}, with the ratio coefficient $c$ for the $w=4$ case estimated as $c \approx 0.47,$ so it appears that this sub-dominant term increases with $w.$

The amplitudes quoted in Table \ref{tab:2a} are coefficient amplitudes. That is to say, the coefficients grow as $\mathcal{A}\mu^n/n,$ while the generating functions behave as $A(x) =\sum_n a_n x^n \sim C\log(1-\mu \cdot x)/x,$ where $C=-\mathcal{A}.$

\begin{table}
    \centering
    \begin{tabular}{|c|c|c|}
    \hline
    $w$ & Growth constant& Amplitude\\
   \hline
 2  & 5.2029985& 0.26022\\
 3  &8.84045 & 0.1485\\
 4  &12.43953 & 0.1050\\
 5  &16.027& 0.08113\\
 6  & 19.609& 0.0662\\
 7  &23.188 & 0.0559\\
 8  &26.767 & 0.04825  \\
 9  &30.341 & 0.0429\\
 10 & 33.917& 0.0382\\
 \hline
    \end{tabular}
    \caption{Results for asymptotics of $w \times 1$ tiles, $w = 2,\cdots, 10.$ The coefficients grow as $c_m \sim {\mathcal A}\cdot \mu^m/m,$ where $\mu$ is the growth constant and ${\mathcal A}$ is the amplitude.}
    \label{tab:2a}
\end{table}

\section{Polynomials}
So far we have analysed the sequences that arise when fixing the tile size and varying the number of tiles. In this section we look at the sequences that arise when we fix the number of tiles $n$ and vary the tile size $w$ instead, an idea first suggested to one of us by Johan Nilsson. In that case the sequences are exact polynomials, as we observe experimentally for lower values of $n$ and which holds generally, as we prove below. In \cite{N16} these polynomials are listed for $n=1..12, $ and here we extend this to $n=14.$
\newline
\newline
\noindent
Let's first take a look at a well-understood special case: the flat LEGO \textit{pyramids} \cite{DE10}. A LEGO pyramid is a LEGO structure in which every tile, except for a single base tile, is resting on top of another tile.
In \cite{DE10} Durhuus and Eilers proved an exact expression for the number $\rho_n(w)$ of flat LEGO pyramids that can be built with $n$ tiles of length $w$:

\begin{theorem}
$\rho_n(w) = \binom{wn-1}{n-1}.$
\label{thm:pyra}
\end{theorem}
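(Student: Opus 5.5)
We will work in the language of Viennot's heaps of pieces. A flat structure of parallel $w\times 1$ tiles places each tile in a row, occupying $w$ consecutive columns. Two tiles in the same row may not share a column. The pyramid condition says every non-base tile sits on some tile in the row directly below whose columns overlap its own. So a flat pyramid is exactly a heap of length-$w$ segments on $\mathbb{Z}$ with a unique minimal piece, the base; two pieces are \emph{dependent} when their column sets intersect. Counting up to translation, we fix the base to occupy columns $1,\dots,w$ in row $1$. Our goal is to prove $\rho_n(w)=\binom{wn-1}{n-1}$ by finding a functional equation for a related class and applying Lagrange inversion.

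\textbf{Step 1: half-pyramids.} Call a pyramid a \emph{half-pyramid} if no tile occupies a column to the left of the base's leftmost column. Let $H(x)$ be its generating function by number of tiles. I will decompose a half-pyramid by its base, in the standard Viennot manner: push the other pieces down as far as the heap structure allows, and read off, column by column of the base from right to left, the (possibly empty) half-pyramid sitting over that column. The aim is to obtain
\[ H = x(1+H)^{w}. \]
The delicate point, and the main obstacle, is making this cutting canonical so that the $w$ sub-half-pyramids are independent. Overlaps between neighbouring columns must be resolved by a left/right priority rule, namely assigning each piece to the rightmost base column it can reach by a chain of pieces avoiding the columns to its right. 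I will check the decomposition by hand against $w=2$, $n\le 3$, where the target counts are $1,3,10$.

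\textbf{Step 2: from half-pyramids to pyramids.} I expect the relation
\[ \sum_{n\ge1}\frac{\rho_n(w)}{n}\,x^n=\log\bigl(1+H(x)\bigr). \]
This is an instance of Viennot's logarithmic lemma, which states that pyramids weighted by $1/(\text{size})$ sum to the logarithm of the heap generating function. Here the relevant heaps are those whose pieces all lie in the "right half". Combinatorially, a pyramid with a marked tile corresponds to a cyclic shift, in the sense of the cycle lemma, of a sequence of half-pyramids. If the direct logarithmic lemma is awkward to set up, I will instead prove $n\rho_n=\,$(number of pairs (half-pyramid sequence, rotation)) via that rotation argument.

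\textbf{Step 3: Lagrange inversion.} With $H=x\phi(H)$ and $\phi(u)=(1+u)^w$, take $F(u)=\log(1+u)$. Lagrange inversion gives
\[ [x^n]F(H)=\frac1n[u^{n-1}]F'(u)\phi(u)^n=\frac1n[u^{n-1}](1+u)^{wn-1}=\frac1n\binom{wn-1}{n-1}. \]
Combined with Step 2, this yields $\rho_n(w)=\binom{wn-1}{n-1}$. As sanity checks, $n=1$ gives $1$, and $n=2$ gives $2w-1$, matching the $2w-1$ overlapping offsets of a second tile on the base. The main difficulty is therefore entirely in Steps 1–2: establishing the precise combinatorial decomposition. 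The algebra is routine once the decomposition is in place.
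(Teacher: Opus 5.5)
\paragraph{Verdict.} The paper does not prove this statement; it quotes it from Durhuus--Eilers \cite{DE10}, so your heap-of-pieces route has to stand on its own. Both generating-function identities you aim for are true, and your Lagrange computation in Step~3 is correct. But Steps~1--2 carry the entire combinatorial content, and they are only announced. The mechanisms you propose for them fail as stated.

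\paragraph{Step 1.} Your priority rule points the wrong way. Take $w=2$ and base $[1,2]$, and stack the pieces $[1,2]$ and $[2,3]$ on the base in either order. If each piece is sent to the rightmost admissible column, both of these different heaps are cut into $\{[2,3]\}$ over column $2$ and $\{[1,2]\}$ over column $1$, so the map is not injective. What works is Viennot's factorization:
\begin{itemize}
\item After removing the base, let $G_1$ be the set of pieces lying, in the heap order, above some piece that starts at column $1$. It is empty or a half-pyramid based at column $1$.
\item Its complement is a lower ideal whose pieces start at columns $\ge 2$ and whose minimal pieces start in $[2,w]$. Iterate with columns $2,\dots,w$.
\item So each piece goes to the \emph{smallest} starting column among the pieces below it. The heap is recovered as $G_w\cdot G_{w-1}\cdots G_1$, with $G_w$ at the bottom.
\item Every $w$-tuple of possibly empty half-pyramids occurs, which gives $H=x(1+H)^w$.
\end{itemize}
Also, your hand check should target the half-pyramid counts $1,2,5$ for $w=2$, not the pyramid counts $1,3,10$.

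\paragraph{Step 2.} The logarithmic lemma cannot be applied to ``heaps in the right half'' as such. That generating function has infinitely many terms of each size, and its logarithm counts pyramids at every absolute position, not translation classes. The missing ingredient is the factorization $R_{\ge1}=R_{\ge2}\,(1+H)$: a heap with all starts $\ge 1$ is uniquely a heap with starts $\ge 2$, topped by the upper ideal of its pieces starting at column $1$. Carry this out in a finite window $[1,N]$, take logarithms and subtract. The lemma then leaves exactly the pyramids whose leftmost piece starts at column $1$, one per translation class. Letting $N\to\infty$ coefficientwise gives $\sum_n\rho_n x^n/n=\log(1+H)$.

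\paragraph{The fallback.} Your cycle-lemma fallback does not work. The series $\log(1+H)=\sum_k(-1)^{k-1}H^k/k$ is alternating, whereas an unsigned count of rotated sequences of half-pyramids produces $\log\frac{1}{1-H}$. That already gives $2w+1$ instead of $2w-1$ at $n=2$.

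\paragraph{A sign-free alternative.} In a pyramid, peel off the upper ideal $G$ of the pieces starting at the leftmost column. Either $G$ is the whole pyramid (a half-pyramid), or the remainder is a pyramid $F$. In the latter case the base of $G$ starts $1,\dots,w-1$ columns to the left of the leftmost column of $F$, and each of these $w-1$ placements occurs. Hence, with $P=\sum_n\rho_n x^n$, you get $P=H+(w-1)HP$. The second form of Lagrange inversion then gives directly
\[
[x^n]\frac{H}{1-(w-1)H}=[u^n]\frac{u}{1-(w-1)u}(1+u)^{wn}\frac{1-(w-1)u}{1+u}=\binom{wn-1}{n-1}.
\]
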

We don't expect to find an equally elegant solution for the more general structures. However, the experimental data suggests that the general case shares many nice properties with the pyramids which we shall discuss in this section.

Let's rewrite the formula:
\begin{equation} \label{pyrapoly}
\rho_n(w)=\binom{wn-1}{n-1}=\frac{\prod_{i=1}^{n-1}(wn-i)}{(n-1)!}=\frac{n^{n-1}}{(n-1)!}\prod_{i=1}^{n-1} \left( w-\frac{i}{n} \right).
\end{equation}
We see that for fixed $n\in\mathbb{N}$ this is an $(n-1)$th degree polynomial in $w,$ with leading polynomial coefficient $\frac{n^{n-1}}{(n-1)!}$ and roots $\frac{1}{n},\frac{2}{n},...,\frac{n-1}{n}$.
\newline
The first few such polynomials are:
$$\rho_2(w)=2w-1,$$
$$\rho_3(w)=\frac{9}{2}w^2-\frac{9}{2}w+1,$$
$$\rho_4(w)=\frac{32}{3}w^3-16w^2+\frac{22}{3}w-1.$$
It will turn out to be useful to rewrite these polynomials in the {\em combinatorial representation}:
\[
\rho_n(w) = \sum_{k=1}^{n} a_{n,k} \binom{w-1}{k-1}.
\]
The first few such polynomials are:
$$\rho_2(w)=2 \binom{w-1}{1}+ \binom{w-1}{0}.$$
$$\rho_3(w)=9 \binom{w-1}{2}+9 \binom{w-1}{1}+ \binom{w-1}{0}.$$
$$\rho_4(w)=64 \binom{w-1}{3}+96 \binom{w-1}{2}+34 \binom{w-1}{1}+ \binom{w-1}{0}.$$
\newline
\noindent
Traditionally, in this representation, each binomial coefficient is associated with a distinct class of whatever objects are being counted. In many cases these classes are given by a simple characteristic such as the {\em width} or {\em height} of the structures being counted.

We observe that a similar representation holds for the LEGO structures we are studying. That is to say, we can write
\[
p_n(w) = \sum_{k=1}^{n} a_{n,k} \binom{w-1}{k-1},
\]
where the coefficients $a_{n,k}$ will be greater than or equal to the corresponding coefficients in the case of pyramids.
We have verified this polynomial structure experimentally for $n \le 10$, and we prove below that it holds in general, just as it does for pyramids. By invoking theorem \ref{conj} (given below) we found the polynomials up to $n \le 14.$ 

For the LEGO structures we are counting, the concept of a class is rather more complicated than just width or height. We first define what we call a {\em type,}
a definition which applies equally to pyramids as to the full LEGO structures we are studying
-- the only thing that changes when going from pyramids to the full LEGO structures is the coefficient $a_{n,k}$ multiplying each binomial coefficient.

%Here is the definition of a {\em type.} 
We first number the blocks from bottom-left to top-right $0,\,1,\,\cdots,\, n-1.$ We take the left-most co-ordinate of the leftmost, bottommost block to be 0. Let $b_i$ be the horizontal offset of each block from 0, so $b_0=0$ always. This labelling uniquely defines the structure, assuming it is a constructible LEGO structure of the type we are considering.

\begin{figure}[h]
\centering
%
% --- Figure (a) ---
\begin{tikzpicture}[scale=0.6]
  \fill[gray!40] (5,2) rectangle (8,3);
  \fill[gray!40] (0,1) rectangle (6,2);
  \fill[gray!40] (1,0) rectangle (4,1);
  \draw[very thick] (5,2) rectangle (8,3);
  \draw[very thick] (0,1) rectangle (6,2);
  \draw[very thick] (3,1) -- (3,2);
  \draw[very thick] (1,0) rectangle (4,1);
  \node at (4, -0.6) {(a)};
\end{tikzpicture}%
\hspace{2cm}%
% --- Figure (b) ---
\begin{tikzpicture}[scale=0.7]
  \fill[gray!40] (7,2)  rectangle (11,3);
  \fill[gray!40] (0,1)  rectangle (8,2);
  \fill[gray!40] (2,0)  rectangle (6,1);
  \draw[very thick] (7,2)  rectangle (11,3);
  \draw[very thick] (0,1)  rectangle (8,2);
  \draw[very thick] (4,1) -- (4,2);
  \draw[very thick] (2,0)  rectangle (6,1);
  \node at (5, -0.6) {(b)};
\end{tikzpicture}%
\caption{Two LEGO structures (also pyramids) with $n=4,$ and (a) $w=3,$ and (b) $w=4.$}
    \label{fig:ex}
\end{figure}

In fig. \ref{fig:ex} we show two LEGO structures (which are also pyramids) with $n=4,$ and, in (a) $w=3$  with  $b_0=0,$ $b_1=-1,$ $b_2=2,$ $b_3=4,$ and in (b) with  $b_0=0,$ $b_1=-2,$ $b_2=2,$ $b_3=5.$

We define the {\em type} as follows:

\begin{definition}[Type]
Given a valid structure with tile size $w$ and offset sequence 
$b_0 = 0, b_1, \ldots, b_{n-1}$, write each offset uniquely as
\[
    b_i = w \cdot q_i + r_i, \quad q_i \in \mathbb{Z},
    \quad r_i \in \{0, 1, \ldots, w-1\},
\]
so that $r_i = b_i \bmod w$ is the residue class of tile $i$ and 
$q_i = \lfloor\frac{b_i}{w}\rfloor$. Relabel the distinct residue classes 
in increasing order as $0, 1, \ldots, k-1$, where $k$ is the number 
of distinct residue classes, to obtain the normalised residue sequence 
$(r_0^{\mathrm{norm}}, r_1^{\mathrm{norm}}, \ldots, r_{n-1}^{\mathrm{norm}})$.

The \emph{type} of the structure is the pair
\[
\bigl(r_0^{\mathrm{norm}}, r_1^{\mathrm{norm}}, \ldots, r_{n-1}^{\mathrm{norm}}\bigr)
\quad\text{and}\quad
\bigl(q_0, q_1, \ldots, q_{n-1}\bigr).
\]
Two structures, possibly with different tile sizes, are of the 
\emph{same type} if and only if both sequences agree.
\end{definition}
The number of types with $n$ tiles and with exactly 
$k$ distinct elements in their normalised residue sequence is denoted 
$a_{n,k},$ and is independent of $w.$

Note that if a residue sequence is, say $(0,3,3,2)$ or $(0,7,7,4),$ then the normalised residue sequence would be $(0,2,2,1)$ in both cases.

For example, consider fig. \ref{fig:ex}(a). We have ${\bf b}=(0,-1,2,4).$ So the residue sequence (already normalised) is $(0,2,2,1),$  and the $q$ sequence is ${\bf q}=(0,-1,0,1).$  

In fig. \ref{fig:ex}(b) we have ${\bf b}=(0,-2,2,5).$ so the (normalised) residue sequence is again $(0,2,2,1),$ and the $q$ sequence is again ${\bf q}=(0,-1,0,1).$ So these two figures are of the same type. In both cases there are three distinct residue classes, so $k=3.$

We can now state the main theorem:
\begin{theorem}
For each fixed $n \in \mathbb{N}$, the function $p_n(w)$ is a polynomial
in $w$ of degree $n - 1$.
\end{theorem}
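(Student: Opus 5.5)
The plan is to prove that, for each $k$, the number of valid structures with $n$ tiles of length $w$ having exactly $k$ distinct residue classes and a prescribed type equals $\binom{w-1}{k-1}$, provided $w\ge n$ (or at least $w\ge k$). It then follows that
\[
p_n(w)=\sum_{k=1}^{n} a_{n,k}\binom{w-1}{k-1},
\]
with $a_{n,k}$ independent of $w$. The right-hand side is visibly a polynomial of degree at most $n-1$. It has degree exactly $n-1$ as soon as $a_{n,n}>0$, and that is already guaranteed by the pyramids, since the pyramid coefficient $a_{n,n}=n^{n-1}/(n-1)!\cdot(n-1)!=n^{n-1}>0$ by Theorem \ref{thm:pyra} and (\ref{pyrapoly}).

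The key step is to show that validity of a structure depends only on its type. Concretely, suppose the offsets are $b_i=wq_i+r_i$ and the residues $r_i$ take $k$ distinct values $0=s_0<s_1<\dots<s_{k-1}\le w-1$ (note $b_0=0$ forces the smallest residue to be $0$). Then whether the structure is valid should depend only on the $q_i$ and on the order pattern of the $r_i$, not on the actual values $s_j$.

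Validity here means two things. First, no two tiles in the same row overlap. Second, the structure is connected via interlocks, where tiles in adjacent rows interlock iff their horizontal intervals $[b_i,b_i+w)$ and $[b_j,b_j+w)$ intersect. The rows are also encoded by the bottom-left-to-top-right numbering, and I will treat the row assignment as part of the type data. Both conditions are comparisons of the form $|b_i-b_j|<w$. Writing
\[
b_i-b_j=w(q_i-q_j)+(r_i-r_j), \qquad |r_i-r_j|<w,
\]
the truth of $|b_i-b_j|<w$ is determined by $q_i-q_j$ together with the sign of $r_i-r_j$. If $q_i=q_j$ it is always true. If $q_i-q_j=1$ it is true iff $r_i<r_j$, and symmetrically for $-1$. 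If $|q_i-q_j|\ge2$ it is false. Non-overlap in a row likewise needs $|b_i-b_j|\ge w$, and end-to-end placement (equality) is covered by the same case analysis. The ordering used for the numbering (left-to-right within rows) also depends only on $(q_i,r_i^{\mathrm{norm}})$ lexicographically. So the set of valid configurations corresponding to a fixed type is in bijection with the choices of $0<s_1<\dots<s_{k-1}\le w-1$, which number $\binom{w-1}{k-1}$.

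The main obstacle is making this type-invariance argument airtight against the exact conventions of the paper. Three points need care. The first is the bottom-left normalisation, which requires the block labelled $0$ to remain the leftmost bottommost block and $b_0=0$, so the minimal residue stays $0$. The second is the row structure, which must be recorded alongside the type or recovered from it. The third is that the hypothesis $w\ge k$ is needed for the count $\binom{w-1}{k-1}$ to be the true number of realisations, while for smaller $w$ the binomial vanishes appropriately. Once this case analysis is checked, the identity holds for every $w\ge1$. Since both sides agree for all positive integers $w$, $p_n$ is a polynomial of degree $n-1$.
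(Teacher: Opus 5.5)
Your proposal is correct and follows the paper's own argument: partition by type, show that each type with $k$ residue classes is realised by exactly $\binom{w-1}{k-1}$ structures, and sum. Your case analysis on $q_i-q_j$ and the order of the residues, together with carrying the row assignment along, in fact justifies the type-invariance that the paper only asserts. The one difference is the degree step. The paper exhibits the staircase $b_i=i$, which has $n$ distinct residues once $w\ge n$, to get $a_{n,n}\ge 1$ directly; you instead import positivity from the Durhuus--Eilers pyramid formula (e.g.\ via $p_n(w)\ge\rho_n(w)$), which works but relies on a cited result rather than a one-line example.
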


\begin{proof}
We partition all valid $n$-tile structures according to their type.
%Since the offset sequence $(b_0, \ldots, b_{n-1})$ takes values in a
%finite combinatorial set for each $n$ (up to the equivalence above),
%there are finitely many types, and their count is independent of $w$.

We claim that each type of offset complexity $k$ gives rise to exactly
$\binom{w-1}{k-1}$ distinct valid structures for each $w \geq k$.
Indeed, one residue class is fixed at $0$ (since $b_0 = 0$), and to
realise a given type one must choose which $k - 1$ further residue
classes from $\{1, 2, \ldots, w-1\}$ are occupied. Any such choice
yields a valid structure whose offset sequence is in the given
equivalence class, and distinct choices yield inequivalent sequences.
The number of such choices is $\binom{w-1}{k-1}$.

Letting $a_{n,k}$ denote the number of types of offset complexity $k$,
and summing over all types, we obtain
\[
    p_n(w) = \sum_{k=1}^{n} a_{n,k} \binom{w-1}{k-1}.
\]
The coefficients $a_{n,k}$ are non-negative integers independent of $w$.
Since $\binom{w-1}{k-1}$ is a polynomial in $w$ of degree $k-1$, the
right-hand side is a polynomial in $w$ of degree at most $n - 1$.

It remains to show that the degree is exactly $n - 1$, i.e.\ that
$a_{n,n} > 0$. Consider the staircase configuration defined by $b_i = i$
for $i = 0, 1, \ldots, n-1$. For $w \geq n$ these offsets are all
distinct modulo $w$, so this structure has offset complexity $n$, and
hence $a_{n,n} \geq 1$. The term $k = n$ contributes $a_{n,n}
\binom{w-1}{n-1}$, a polynomial of degree $n-1$ with positive leading
coefficient, so $\deg p_n(w) = n - 1$.
\end{proof}

Some examples may help to make this definition clearer. For example, in Fig. \ref{fig:32} we show all 11 possible LEGO structures with $w=2$ and $n=3.$
\begin{figure}[htbp]
    \centering
    \includegraphics[angle=270,width=1.0\linewidth]{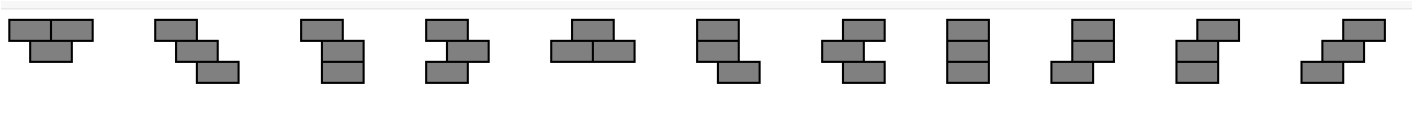}
    \caption{All 11 Lego structures with $w=2,$ and $n=3.$}
    \label{fig:32}
\end{figure}
We can partition all valid structures by their offset complexity $k$.  The eighth entry has offset signature $(0,0,0),$ and hence offset complexity $k=1.$ All the others have offset complexity $k=2,$ as it is not possible to have offset complexity $k > w-1=2,$ as noted above. So we see that $a_{3,1}=1$  and $a_{3,2}=10.$ 

When $w=3$ and $n=3$ there are 31 LEGO structures, as shown in Fig. \ref{fig:33}, and we have the result
$$p_3(w)=10 \binom{w-1}{2}+10 \binom{w-1}{1}+ \binom{w-1}{0}.$$
There are twenty structures with
offset complexity $k=2,$ and ten with offset complexity $k=3.$ The count 20 is just the product $10 \binom{w-1}{1},$ and the count 10 is just the product $10 \binom{w-1}{2},$ (recall that $w=3$ in this case). That is, $a_{3,2}=10$
as each structure with offset complexity $k=2$ now occurs with multiplicity 2, as there are exactly two structures of each type. 

As an example, the first structure in Fig. \ref{fig:32} is of the same {\em type} as the last two structures in Fig. \ref{fig:33}. And when $w=4,$ that first structure in Fig. \ref{fig:32} will spawn 3 structures of the same type. (In all cases, the normalised residue sequence is $(0,1,1)$ and the $q$ sequence is $(0,-1,0).$)

The point is that $a_{n,k}$ is the number of distinct structure types comprising $n$ tiles, with offset complexity $k.$ Each type occurs with multiplicity $\binom{w-1}{k-1}.$
To determine the value of $a_{n,k},$ note that $\binom{w-1}{k-1}=1$ when $w=k,$ so one just needs to count the number of structures with offset complexity $k=w$ when dealing with tiles of size $w \times 1.$
Note too that $a_{n,k}=0$ when $w < k,$ (as in that case there are no possible offset sequences such that  $k=w$), but this is automatically accounted for as the binomial coefficient $\binom{w-1}{k-1}$ vanishes in that case.

Summarising, we have shown that:
\[
\rho_n(w) = \sum_{k=1}^{n} a_{n,k} \binom{w-1}{k-1},
\]
for pyramids, and for general LEGO structures, we have:
\[
p_n(w) = \sum_{k=1}^{n} a_{n,k} \binom{w-1}{k-1},
\]
where of course the coefficients $a_{n,k}$ will in general be different for the two situations, as pyramids are a subset of the LEGO structures we are considering.

\vspace{-3.5cm}
\begin{figure}[htbp]
\centering
  \includegraphics[angle=270,width=1.0\linewidth]{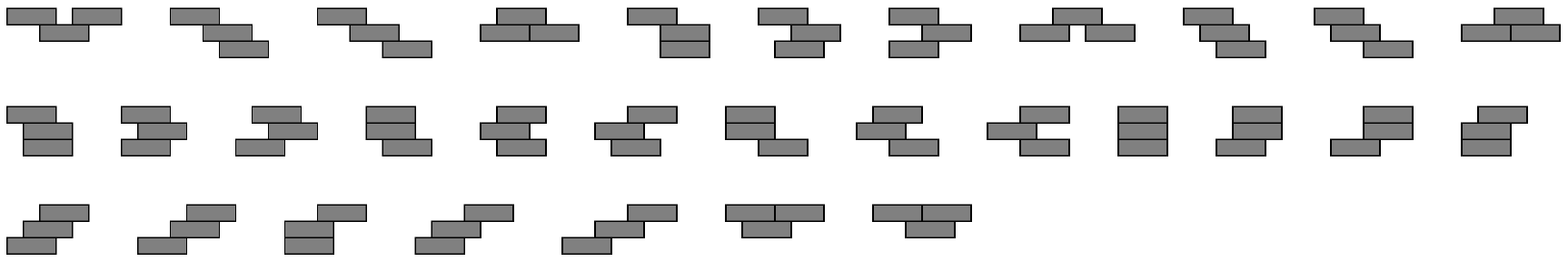} 
  \vspace{-4cm}
\caption{Thirty-one allowable structures of three $3 \times 1$ blocks.}
\label{fig:33}
\end{figure}

Using the data from the enumeration program we find
$$p_3(w)=10 \binom{w-1}{2}+10 \binom{w-1}{1}+ \binom{w-1}{0}.$$ 

$$p_4(w)=82 \binom{w-1}{3}+123 \binom{w-1}{2}+43 \binom{w-1}{1}+ \binom{w-1}{0},$$ 

$$p_5(w)=950\binom{w - 1}{4} +1900 \binom{w-1}{3}+1135 \binom{w-1}{2}+185 \binom{w-1}{1}+ \binom{w-1}{0},$$ 
$$p_6(w)=14260\binom{w - 1}{5}+35650\binom{w - 1}{4} +30142 \binom{w-1}{3}+9563 \binom{w-1}{2}+813 \binom{w-1}{1}+ \binom{w-1}{0},$$ 
\begin{multline*}
p_7(w)=263728\binom{w - 1}{6}+ 791184\binom{w - 1}{5}+865575\binom{w - 1}{4} +\\+412510 \binom{w-1}{3}+78046 \binom{w-1}{2}
+3655 \binom{w-1}{1}+ \binom{w-1}{0},$$ 
\end{multline*}
\begin{multline*}
p_8(w)=5799524\binom{w - 1}{7} +20298334\binom{w - 1}{6}+ 27257646\binom{w - 1}{5}+17398280\binom{w - 1}{4} +\\+5252794 \binom{w-1}{3}+630078 \binom{w-1}{2}
+16730 \binom{w-1}{1}+ \binom{w-1}{0}.$$ 
\end{multline*}
From our enumerations we can directly determine the first ten polynomials $p_1..p_{10}$. These have been verified with a varying number of extra data points. For $n<7$ we have enumeration data for block lengths up to at least $w=25$. For larger $n$ we can't go quite as far.
%5799524*binomial(w - 1, 7) + 20298334*binomial(w - 1, 6) + 27257646*binomial(w - 1, 5) + 17398280*binomial(w - 1, 4) + 5252794*binomial(w - 1, 3) + 630078*binomial(w - 1, 2) + 16730*binomial(w - 1, 1) + binomial(w - 1, 0)
%[TODO: maybe add a pretty plot of the polynomials to motivate the idea of symmetry]
\newline \noindent
Several characteristics of these polynomials can be observed. Recall the structure, $$p_n(w)=\sum_{k=1}^{n} a_{n,k} \binom{w-1}{k}.$$
We observe that $$a_{n,n-1} = \frac{n-1}{2}a_{n,n},\,\,\, a_{n,0}=1,$$ and that $$\sum_{k=1}^{n} (-1)^{k} a_{n,k} = (-1)^{n+1}.$$
The last observation is a consequence of a very useful symmetry property that applies. 

\begin{theorem}\label{conj}

$$\forall n\in\mathbb{N}\hspace{15pt} p_n(1-w) = (-1)^{(n-1)}p_n(w).$$
\label{thm:evenodd}
\end{theorem}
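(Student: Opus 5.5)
The identity is an Ehrhart--Macdonald reciprocity statement. The plan is to write $p_n(w)$ as a finite sum of lattice-point counts of dilated \emph{half-open} polytopes in which every inequality involving $w$ is strict or non-strict according to a fixed rule. Reciprocity then evaluates the count at $-w$ by swapping strict and non-strict inequalities. That swap turns the count for tile length $w$ into the count for tile length $w+1$, which gives $p_n(-w)=(-1)^{n-1}p_n(w+1)$, equivalent to the statement of Theorem~\ref{thm:evenodd}.

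\emph{Step 1 (skeletons).} Fix a \emph{skeleton}: an assignment of the $n$ tiles to rows, their left-to-right order within each row, and, for every pair of tiles in adjacent rows, a choice among ``$i$ entirely left of $j$'', ``overlapping'' and ``$i$ entirely right of $j$''. The skeleton must make the overlap graph connected and be compatible with the labelling convention (tile $0$ leftmost in the bottom row). Every structure has exactly one skeleton. For a fixed skeleton, the admissible offset vectors $b\in\mathbb{Z}^n$ with $b_0=0$ are cut out by constraints on differences $d=b_i-b_j$:
\begin{itemize}
\item overlaps give $-w<d<w$ (strict);
\item non-overlaps, in the same row or in adjacent rows, give $d\ge w$ or $d\le -w$ (non-strict);
\item the bottom-left convention gives similar non-strict constraints.
\end{itemize}
Connectivity bounds all $|b_i|$ by $nw$, so the region is $wQ$ for a bounded rational polyhedron $Q\subset\{b_0=0\}\cong\mathbb{R}^{n-1}$, with the overlap facets removed. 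The constraint matrix consists of differences $e_i-e_j$, so it is totally unimodular and $Q$ has integer vertices. Hence the count is an honest polynomial $L_\sigma(w)$, and $p_n(w)=\sum_\sigma L_\sigma(w)$ is a finite sum. This also recovers the polynomiality result proved above.

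\emph{Step 2 (reciprocity).} Apply Stanley's reciprocity for half-open polytopes: if $H$ is a full-dimensional polytope with some facets removed and $\bar H$ is the same polytope with exactly the complementary facets removed, then
\[
L_H(-t)=(-1)^{\dim}L_{\bar H}(t).
\]
Here $\dim=n-1$. In $\bar H$ the overlap constraints become $-w\le d\le w$, and the non-overlap constraints become $d>w$ or $d<-w$. For integers these read $|d|<w+1$ and $|d|\ge w+1$. These are exactly the skeleton-$\sigma$ constraints for tile length $w+1$, so $L_{\bar\sigma}(w)=L_\sigma(w+1)$. Summing over $\sigma$ gives $p_n(-w)=(-1)^{n-1}p_n(w+1)$, and substituting $w\mapsto w-1$ gives the theorem.

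\emph{Main obstacle.} The expected difficulty is the full-dimensionality hypothesis in Step 2. A skeleton whose closed polytope $Q$ has dimension less than $n-1$ could still carry lattice points in its half-open version, or the strictness pattern could fail to be a clean ``remove these facets'' description. First I would show that every nonempty skeleton region is open in $\{b_0=0\}$. Since all $w$-dependent constraints are single-difference inequalities and every strict one is of overlap type, any realisation can be perturbed slightly in real space without violating a strict constraint. I would then check that no combination of the non-strict constraints forces an equality. If this fails, the fallback is to refine skeletons into cells of the hyperplane arrangement $\{b_i-b_j\in\{0,\pm w\}\}$ and apply reciprocity cell by cell, using the Euler-characteristic form of Ehrhart--Macdonald reciprocity.
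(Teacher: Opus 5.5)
Your route differs from the paper's, and it has a gap exactly at the step you flagged as the main obstacle.

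\textbf{The gap.} Stanley's half-open reciprocity applies when $H$ and $\bar H$ are $wQ$ with \emph{complementary sets of facets} removed. Flipping the strictness of every inequality in a possibly redundant description is not enough. For example, take the triangle $x\ge 0$, $y\ge 0$, $x+y\le 1$ and add the redundant strict inequality $x+y>0$. Then $L_H(t)=\binom{t+2}{2}-1$ and $L_{\bar H}(t)=L_{P^\circ}(t)=\binom{t-1}{2}$, yet $L_H(-t)\neq L_{\bar H}(t)$.

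Your skeleton systems are redundant in general. Nothing in your plan excludes an overlap inequality that cuts out a lower-dimensional face of $Q$ lying only in non-overlap facets, or vice versa. Your proposed first check is also false as stated: a skeleton region is generally not open, because it contains the points of its non-strict facets. The perturbation remark shows only that the strict constraints are stable, not that $Q$ is full-dimensional. The fallback via cells of the arrangement is not yet an argument.

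\textbf{How to close it.} Use the signs of the right-hand sides. Write each constraint as $\alpha\cdot b\le\beta$ with $\alpha=\pm(e_i-e_j)$. Overlap constraints have $\beta=w$ and order constraints have $\beta=-w$. The bottom-left convention is implied by the row order, so no other right-hand sides occur.

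First, use the constraint graph, with an edge of weight $c$ for each $b_u-b_v\le c$. An implicit equality of a feasible difference system lies on a cycle of total weight $0$. With edge weights $\pm w$, such a cycle contains both an overlap edge and an order edge. So if $Q$ is not full-dimensional, $H$ and $\bar H$ are both empty for every $w$, and that skeleton contributes $0=0$.

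Second, suppose $Q$ is full-dimensional and a constraint is tight on a face $F$. Then $\alpha=\sum\lambda_k\alpha_k$ with $\lambda_k\ge 0$, summed over the facets containing $F$. Evaluating at a point of $F$ gives $\beta=\sum\lambda_k\beta_k$. Hence a face cut out by an overlap (resp.\ order) constraint lies in an overlap (resp.\ order) facet. No facet is of both kinds, since positively proportional normals force right-hand sides of the same sign. So $H$ is $wQ$ minus the overlap facets, $\bar H$ is $wQ$ minus the order facets, and your Step 2 goes through.

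\textbf{Comparison with the paper.} The paper avoids this bookkeeping with inclusion--exclusion over the non-overlap (``red'') constraints, replacing each $d\ge w$ by the complement of $d<w$. Every term then counts interior lattice points of $\{x_i-x_j\le 1\}$, which contains the origin in its interior. So only classical interior/closure reciprocity is needed, with the same shift $L_{P_G}(w)=g_G(w+1)$. Your version uses one polytope per skeleton and no alternating sum, but it needs the two facts above.
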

\begin{proof}

We will break our collection of LEGO configurations into a number of equivalence classes, each corresponding to a \textit{connected red-black graph} $G$ as defined below. A class will consist of: 

\begin{itemize}
\item A tuple $(m_1, ... , m_k)$ with $m_1+ \cdots+m_k=n$ where $m_i$ is the number of bricks on the $i$th layer. Layers are numbered from the bottom (layer 1) up. 
\item For each of the $m_1m_2+m_2m_3 + ... +m_{k-1}m_k$ pairs of bricks on adjacent layers the disjoint possibilities (1) they are joined, (2) the top brick is strictly to the left of the bottom brick, (3) the top brick is strictly to the right of the bottom brick.
\end{itemize}

Note that here a \textit{class} is a coarser notion than the notion of \textit{type}, defined above.

From such data one can construct a directed graph with red and black edges -- the vertices correspond to bricks, while edges encode the relative-position constraints between bricks, i.e. horizontal-offset inequalities. One constructs a red edge from the vertex corresponding to the brick furthest to the right to the vertex of the brick to the left in cases (1) and (3), and constructs a black edge going both ways in case (2). For any two adjacent bricks on the same layer, one is constrained to lie strictly to the right of the other, and we similarly add a red edge to enforce this constraint.

Note also that such a directed graph restricted to its black edges is strongly connected, as our LEGO configuration itself is connected. 

Expressed more formally as follows, we can count structures in a class by points in a polytope as shown below, bringing the problem into the realm of Ehrhart reciprocity \cite{BR, S74}.%  We now define a polynomial associated with such a graph. 

%We will then useConsequently, counting structures translates to counting integer points in a polytope, and the reflection symmetry is then
\begin{definition}
Consider $H=(V=\{v_0, ... ,v_{n-1}\},E_B,E_R)$ a directed graph with black edges (denoted by $E_B$) and red edges (denoted by $E_R$). 
 Assume also that $G$ restricted to its black edges is strongly connected. We call such a graph a \textit{connected red - black graph}. For $w>0$ an integer, we let $g_H(w)$ be the number of tuples $(a_1, ... ,a_{n-1}) \in \mathbb{Z}^{n-1}$ satisfying 

\begin{itemize}
\item $a_0:=0$
\item $a_{i} <  a_{j} +w : (i,j) \in E_B$
\item $a_{i} \ge a_{j} +w : (i,j) \in E_R$
\end{itemize}

Note that the strong connectedness (i.e, the existence of a directed path in both directions from any vertex to the root) is a sufficient  condition for this to be finite: if there is a path of length $k$ from $v_i$ to $v_j$ we have $a_i < a_j+kw$ so in particular for all $i,$ $|a_i|<(n-1)w$.

%Note the change in notation: I'll use H for red-black graphs and G for particular black graphs but its the same function

\end{definition}

%Each $(n-1)$-tuple is a point in the set of valid integer labellings. For a fixed graph $G$ and fixed $w,$ there are generally many valid tuples, and these are counted by $g_G(w).$ That is to say,
%\[g_G(w) = \#\{a : a_i < a_j + w \,\,\text{for}\,\, (v_i,v_j) \in E\} = \#\{a : a_i \le a_j + w - 1\},\] as the ${a_i}'s$ are integers.
%Loosely speaking, the tuple's job is to turn the number of LEGO structures into the number of lattice points in a polytope, which then unlocks the Ehrhart machinery. 

If a class corresponds to some graph $H,$ then $g_H(w)$ counts the number of LEGO configurations in that class. Furthermore, $p_n(w)$ is a sum of functions of the form $g_H(w)$ where $H$ varies over some connected red - black graphs on $n$ vertices.

It now suffices to prove that $g_H$ can be extended to a polynomial satisfying $g_H(1-w)=(-1)^{n-1}g_H(w)$.

In full generality this is not yet quite enough for us to use basic Ehrhart duality. However, we can use it in a special case:

% Note: I'm not sure but wouldn't be surprised to see more general results of this form in the Ehrhart literature

\begin{definition}

For any strongly connected rooted directed graph $G=(V,E)$ we can define $g_G$ by considering it to be a connected red-black graph without any red edges, that is, for $w>0$ an integer, we let $g_H(w)$ be the number of tuples $(a_1, ... ,a_{n-1}) \in \mathbb{Z}^{n-1}$ satisfying 

\begin{itemize}
\item $a_0:=0$
\item $a_{i} <  a_{j} +w : (i,j) \in E_B$
\end{itemize}

We call these \text{connected black graphs}.
\end{definition}

\begin{lemma}
For any connected black graph $G$, $g_G$ may be extended to a polynomial (which we will also call $g_G$ for brevity) satisfying
\[g_G(1-w)= (-1)^{n-1}g_G(w).\]
\end{lemma}
\begin{proof}

Now, the constraints $x_i \le x_j+1$ for $(v_i,v_j) \in E$ carve out an integer polytope $P_G$ in $\mathbb{R}^{n-1}=\{(x_1,...,x_{n-1}): x_i  \in \mathbb{R}\}$ where we define $x_0=0,$ and this is exactly the polytope we dilate. $P_G$ has a nonempty interior (all $x_i = 0$ satisfies $x_i \le x_j + 1$ strictly), hence a neighbourhood of the origin lies in $P_G,$ so it is full-dimensional in $ \mathbb{R}^{n-1}.$
Its Ehrhart polynomials are, for positive integer $w,$
\[L_{P_G}(w)=\#\{a \in {\mathbb Z}^{n-1}:a_i \le a_j+w\}=g_G(w+1),\] since $a_i \le a_j+w$ is $a_i < a_j+w+1,$ i.e. the strict-g at argument $w+1.$ And
\[L_{P^\circ_G}(w)=\#\{a \in {\mathbb Z}^{n-1}:a_i < a_j+w\}=g_G(w).\] Recall that $P_G^\circ$ is the interior of $P_G$.
By Ehrhart  duality \cite{S74, BR} it is the case that 
$L_{P_G^\circ}(w) = (-1)^{n-1}L_{P_G}(-w).$ 
%where $L_P(t)$ is the Ehrhart polynomial for the polytope $P$ for $P=P_G,P_G^\circ$, i.e. for positive integer $t,$ $L_P(t)$ is the cardinality %of $\mathbb{Z}^{n-1}\cap tP$ where $tP=  \{t\vec{v}: \vec{v} \in P\}$. It is certainly the case for positive integer $w$ that
%\[L_{P_G}(w)=g_G(w).\]
This gives \[g_G(w)=(-1)^{n-1}g_G(-w+1)=(-1)^{n-1}g_G(1-w).\] 
By Ehrhart's theorem $L_{P_G}$ is a polynomial, so $g_G(w) = L_{P_G}(w-1)$ agrees with a polynomial for all integers $w \ge 1;$ we take this polynomial as the extension of $g_G.$
Since these agree for all positive integers $w$ and both sides are polynomials, the identity holds identically.
%Since both sides are polynomials in $w,$ this identity holds identically, and substituting $w \to 1-w$ gives $g_G(-w) = (-1)^{n-1} g_G(w-1);$ %replacing $w$ by $w+1$ then yields
%\[g_G(1-w)= (-1)^{n-1}g_G(w)\].
\footnote{A more general setting for this kind of identity seems to be convex integer polytopes whose linear programming duals are themselves integer polytopes -- this is a pleasing symmetry, but it is unclear to the authors what is known about such objects.}
\end{proof}

We would like to extend this result from connected black graphs to connected red - black graphs. Fortunately it will turn out that $g_H$ is a linear combination of polynomials of the form $g_G$ for $G$ a connected black graph.
\begin{lemma}
$g_H(w)$ is a polynomial satisfying 
\[g_H(1-w)=(-1)^{n-1}g_H(w)\]
\end{lemma}
\begin{proof}
By the inclusion - exclusion principle for $G=(V=\{v_0, ... ,v_{n-1}\},E_B,E_R)$ we have
\[g_H(w) = \sum_{T \subseteq E_R} (-1)^{|T|} g_{(V, E_B \cup T)}(w).\]
Because a red edge imposes $a_i \ge a_j + w,$ which is the complement of the black constraint $a_i < a_j + w \,\,(= a_i \le a_j + w-1)$ on that edge - over the integers these two are exactly complementary, with no overlap and no gap. So ``red edge satisfied"= ``black-version of that edge violated". $g_{(V, E_B \cup T)}$ counts tuples satisfying the black $E_B$ constraints, and the black version of the edges in $T,$ and the alternating sum over $T$ sieves out exactly those violating all red edges' black-versions, i.e. satisfying all the red constraints.
So
\[g_H(1-w) = \sum_T (-1)^{|T|} g_{G_T}(1-w) = \sum_T (-1)^{|T|} (-1)^{n-1} g_{G_T}(w) = (-1)^{n-1} g_H(w),\]
and so each of the graphs $G_T=(V,E_B \cup T)$ satisfy

\[g_{G_T}(1-w)=(-1)^{n-1}g_{G_T}(w)\]

That is to say, for each subset $T$ of the red edges, recolour those red edges as black (turning the red $a_i \ge a_j+w$ into the black $a_i \le a_j+w-1$ on that edge), count the resulting all-black graph with $g,$ and add it with sign $(-1)^{|T|}.$ Because each summand $g_{G_T}$ satisfies the $(-1)^{n-1}$ reflection, and the signs $(-1)^{|T|}$ don't depend on $w,$ the whole alternating sum inherits the same reflection symmetry.
\end{proof}

Observing that 
\begin{itemize}
\item $p_n$ is a linear combination of polynomials $g_H$ for $H$ a connected red - black graph on $n$ vertices.
\item For any connected red-black graph $H$ on $n$ vertices $g_H$ is a linear combination of polynomials $g_G$ for $G$ a connected black graph on $n$ vertices.
\item For $G$ a connected black graph on $n$ vertices $g_G$ is a polynomial satisfying $g_G(1-w)=(-1)^{n-1}g_G(w)$
\end{itemize}

%\begin{lemma}
%$p_n$ is a sum of polynomials $g_H.$
%\end{lemma}
%begin{proof}
%We can break the set of Lego polyominoes into classes which are enumerated by polynomials $h_G$: we enumerate the number of bricks in each layer, and label them in some canonical way (e.g. from left to right, one layer at a time). We get a constraint from insisting bricks in the same layer do not overlap and we also specify, for any pair of bricks in adjacent layers, the disjoint possibilities of: (1) they are joined, (2) the top brick is strictly to the left of the bottom brick, (3) the top is strictly to the right. Every structure determines its layer-brick-counts and, for each adjacent pair, exactly one of the three relative positions - so the class is a function of the structure, and the classes are therefore disjoint and exhaustive. Since the classes partition all $n$-tile structures and each class $C$ is counted by some $h_{G(C)}$ (by construction of its constraint graph), summing over classes gives $p_n(w) = \sum_C h_{G(C)}(w)$, a sum of $h_G$ polynomials.
%\end{proof}
%. Note too that the connectedness of our Lego polyominoes means that all graphs constructed here are strongly connected. The point here is that each class of structures is counted by one $g_GG$ hence by an alternating sum of $g_G$'s. Summing over all classes gives $p_n(w)$ as a finite alternating sum of Ehrhart polynomials - every term reflection-symmetric up to the same $(-1)^{n-1}.$ 

We conclude that 
\[ p_n(1-w) = (-1)^{(n-1)}p_n(w).\]

\end{proof}

So $p_{n+1}(w+\frac{1}{2})$ is an even resp. odd polynomial when $n$ is an even resp. odd integer. This follows immediately by shifting $w \mapsto w+\tfrac12$. The same is true for the pyramids and follows from the symmetric placement of the roots $\frac{1}{n},\frac{2}{n},...,\frac{n-1}{n}$ around $\frac{1}{2}$.

As this is true in general we needed only compute half as many points to determine $p_n$. Normally we would need to compute $p_n(1),p_n(2),...,p_n(n)$, but exploiting this symmetry leaves us with the substantially easier task of computing just the first $\lceil\frac{n}{2}\rceil$ numbers $p_n(1),p_n(2),...,p_n(\lceil n/2\rceil)$.
This yields  four more polynomials $p_{11},p_{12},p_{13}$ and $p_{14}$ with the currently available data. The first eight polynomials are given explicitly above, and to
save space, we give the coefficients of the remaining six polynomials $p_9(w), \ldots , p_{14}(w)$ in tabular form in Table \ref{tab:coeffs}.%\footnote{We have been advised by Michal Adamaszek that these results can be proved, arguably more elegantly, using the machinery of Ehrhart theory.}.

\begin{table}[h]
\footnotesize
    \centering
    \begin{tabular}{|c|c|c|c|c|c|c|}
    \hline
    Coefficient &$n=9$&$n=10$&$n=11$&$n=12$&$n=13$&$n=14$\\
    \hline
                 $ a_{n,1}$&      1& 1& 1& 1& 1& 1\\
 $ a_{n,2}$&77704& 365094& 1731796& 8279362& 39845688& 192852358\\
 $ a_{n,3}$&5079918& 41060566& 333328490& 2719109272& 22289691607&183567850172\\
 $a_{n,4}$&64387692& 772751250& 9167298700& 108088294433& 1270843912460&14929697027499\\
 $a_{n,5}$&316091052& 5408285876& 89123538198& 1433287497171& 22682092706596& 355124408138967\\
 $a_{n,6}$&754905976& 18475329110& 418529599512& 9007012080649&187105066615668& 3790795567806653\\
 $a_{n,7}$&942017540& 34352357886& 1087675765240& 31429263493327&852962648306500& 22135576491277369\\
 $a_{n,8}$&591756184& 35586878148& 1652106315312& 65646362702546&2352976034982156& 78449155249474142\\
 $a_{n,9}$&147939046& 19328939064& 1462267438608& 84177516803694&4099070898287721& 178300648622330310\\
 $a_{n,10}$&& 4295319792& 699493906520& 65014544794072&4544263507797230& 266042204038262334\\
 $a_{n,11}$&& & 139898781304& 27785057969284& 3112564949661888&259628599792491424\\
 $a_{n,12}$&& & & 5051828721688& 1202024862601332&159729470131220924\\
 $a_{n,13}$&&&&& 200337477100222&56262135084474878\\
 $a_{n,14}$&&&&&& 8655713089919212\\        
 \hline
    \end{tabular}
    \caption{Coefficients $a_{n,k}$ of the polynomials $p_9(w), \ldots , p_{14}(w).$}
    \label{tab:coeffs}
\end{table}
\normalsize

\subsection{Alternative representation of the polynomials.}
An alternative, and arguably more appealing -- because symmetries are manifestly obvious -- representation of the above polynomials (in either representation) follows if, instead of the polynomials, we consider the generating functions of the coefficients produced by the polynomials.  Let us write the generating function for LEGO configurations as: $$ L(x,w)=\sum_{n \ge 1} l_n(w) x^n,$$ where $l_n(w)=\sum_k d_{n,k} w^k $ is the generating function for LEGO structures of $n$ blocks.
Then we find
%\begin{tiny}
\begin{equation}
l_1(w) = \frac{1}{1-w},
\end{equation}
\begin{equation}
l_2(w)=\frac{1+w}{(1-w)^2}
\end{equation}
\begin{equation}
l_3(w):=\frac{1+8w+w^2}{(1-w)^3}
\end{equation}
\begin{equation}
l_4(w)=\frac{1+40w+40w^2+w^3}{(1-w)^4}=\frac{(1+w)(1+39w+w^2)}{(1-w)^4}
\end{equation}
\begin{equation}
l_5(w)=\frac{1+181w+586w^2+181w^3+w^4}{(1-w)^5}
\end{equation}
\begin{equation}
l_6(w)=\frac{w^5 + 808w^4 + 6321w^3 + 6321w^2 + 808w + 1}{(1-w)^6} = \frac{(1+w)(w^4 + 807w^3 + 5514w^2 + 807w + 1)}{(1-w)^6}
\end{equation}
\begin{equation}
l_7(w)=\frac{w^6 + 3649w^5 + 59786w^4 + 136856w^3 + 59786w^2 + 3649w + 1}{(1-w)^7}
\end{equation}
\begin{multline}
l_8(w)=\frac{w^7 + 16723w^6 + 529719w^5 + 2353319w^4 + 2353319w^3 + 529719w^2 + 16723w + 1}{(1-w)^8}\\
=\frac{(1+w)(w^6 + 16722w^5 + 512997w^4 + 1840322w^3 + 512997w^2 + 16722w + 1)}{(1-w)^8}
\end{multline}
\begin{equation}
\small
l_9(w)=\frac{w^8 + 77696w^7 + 4536018w^6 + 35539912w^5 + 67631792w^4 + 35539912w^3 + 4536018w^2 + \cdots +1}{(1-w)^9}
\end{equation}
\begin{equation}
l_{10}(w)=\frac{(w + 1)(w^8 + 365084w^7 + 37774766w^6 + 457775070w^5 + 1155830054w^4 + \cdots +1 )}{(1-w)^{10}}
\end{equation}
\begin{equation}
\footnotesize
l_{11}(w)=\frac{w^{10} + 1731786w^9 + 317742371w^8 + 6563015316w^7 + 34140174364w^6 + 57853453628w^5 + \cdots +1}{(1-w)^{11}}
\end{equation}
\begin{footnotesize}
\begin{equation}
l_{12}(w)=\frac{(w + 1)(w^{10} + 8279350w^9 + 2628036357w^8 + 81360845753w^7 + 584114706636w^6
+ 1189690624650w^5 + .. + 1)}{(1-w)^{12}}
\end{equation}
\end{footnotesize}
\begin{tiny}
\begin{equation}
    l_{13}(w)=\frac{w^{12}+39845676 w^{11}+21851389105 w^{10}+1050138509010 w^9+12240959078746 w^8+48737091894868 w^7+76237315665410 w^6+ \cdots}{(1-w)^{13}}
\end{equation}
%\end{scriptsize}
%\begin{tiny}
\begin{equation}
\frac{l_{14}(w)}{1+w}=\frac{w^{12} + 192852344w^{11} + 181060769610w^{10} + 12742118161339w^9 + 203139123944053w^8 + 1033179903486895w^7 + 1829371746531122w^6 + \cdots}{(1-w)^{14}}
\end{equation}
\end{tiny}

Clearly, for $p_9$ to $p_{14}$ we have utilised the symmetry in the numerator to write the numerator in a more compact form.
We observe that $$l_n(w)=(-1)^n\frac{1}{w}l_n \left ( \frac{1}{w} \right ),\,\, w \ne 0, \,\, l_n(0) =1,$$
and that $$l_n(w)=\frac{A_n(w)}{(1-w)^n},$$ where $A_n(w)$ is a symmetric, unimodal polynomial of degree $n-1.$ Further, when $n$ is even, $A_n=(1+w)B_n(w),$ where $B_n(w)$ is again a symmetric, unimodal polynomial, but of degree $n-2.$

\subsection{Further observations about the polynomial coefficients.}
In conventional polynomial notation, one has: $$p_n(w) = \sum_{i=0}^{n-1} b_{n,i}w^i$$

We can readily transfer between the conventional and binomial representations as $$b_{n,i} = \sum_{k=i}^{n-1} \frac{{\mathcal S}_{k+1}^{(i+1)}}{k!}a_{n,k},$$ where ${\mathcal S}_{k}^{(i)}$ are Stirling numbers of the first kind.
Equivalently,  $$a_{n,i} = (i-1)!\sum_{k=0}^{n-1} {\mathfrak S}_{k+1}^{(i)}b_{n,k},$$ where ${\mathfrak S}_{k}^{(i)}$ are Stirling numbers of the second kind.

%\;\;,\;\; f_n(w + \tfrac{1}{2}) = \sum_{i=0}^{n-1} b_{n,i}w^i$$
% Just by definition we have the relation $$a_{n,i} = \sum_{k=i}^{n-1}\left(-\frac{1}{2}\right)^{k-i}\binom{k}{i} b_{n,k} $$
Looking at the list \ref{sec:polylist} of polynomials we notice that the constant terms $b_{n,0}$ equals $1$ if $n$ is odd and $-1$ if $n$ is even. This can be proved true for all $n$ by theorem \ref{conj} because then $b_{n,0}=p_n(0) = p_n(1)(-1)^{n-1} = (-1)^{n-1}$.
\noindent
Since there is only a single LEGO structure with $w=1,$  it follows that $\sum_{i=0}^{n-1} b_{n,i}=1,$ since $p_n(1)=1.$

We also notice a simple relation between the leading coefficients $b_{n,n-1}$ and the coefficient of the second highest order term $b_{n,n-2}$: $$b_{n,n-2} = -\frac{n-1}{2}b_{n,n-1}$$ 
This can also be shown to follow from theorem \ref{conj} using the fact that the coefficient of the second highest order term of $p_n(w+\frac{1}{2})$ is then $0$.
\newline \newline \noindent

\subsection{Polynomial coefficient growth}
\begin{comment}
We have visualized the growth of the coefficients of the polynomials in figure \ref{fig:coeff1} and \ref{fig:coeff2} below. {\bf [NOTE: These plots are very similar to those of the pyramids. We could mention that. I examine this more in my thesis and prove for the pyramids that the growth constant of the polynomials leading coefficients as well as the growth of the growth constants is equal to $e$ (pp 50). Could be mentioned in relation to the analysis below].}
\newline

\begin{figure}
    \centering
    \includegraphics[width=1\linewidth]{coefficients_plot1_2024.pdf}
    \caption{Coefficients of $p_n$ for $n=1..14$ plotted against the order of their term. Dotted lines group coefficients of each individual polynomial. One solid line is drawn though the leading coefficients, one through the second highest order term coefficients and so on. The rightmost point is the leading coefficient of $p_{14}$.}
    \label{fig:coeff1}
\end{figure}
\begin{figure}
    \centering
    \includegraphics[width=1\linewidth]{coefficients_plot2_2024.png}
    \caption{Coefficients of $p_n$ for $n=1..14$ plotted against the degree of their polynomial. Dotted lines group coefficients of same order. Solid lines are drawn as in figure \ref{fig:coeff1}}
    \label{fig:coeff2}
\end{figure}

\end{comment}

Note that as $n$ gets large, the polynomials will be dominated by the leading-order term. Looking at how the leading-order term grows then tells us how the growth constant grows with $n.$ We have analysed the sequence given by the leading-order polynomial coefficients, and estimate that it grows as $3.5730^n/n.$ This implies that the growth constants should grow linearly, with the linear term being $3.5730n.$

It follows that as $n$ increases, the growth constants should increase by a multiplicative factor asymptotically approaching 3.5730. 

From Table \ref{tab:2} one sees that going from $9 \times 1$ to $10 \times 1$ tiles, the growth constant already increases by 3.577. Fitting the data from Table \ref{tab:2} to the linear function $c+3.5730n,$ we estimate $c\approx -1.780,$ so that
the growth constants can be estimated by $\mu(n) \approx 3.5730n-1.780,$ a result that should become increasingly accurate as $n$ increases.
\newline

\section{$2 \times 4$ structures}
We turn now to a three-dimensional tiling problem, which, historically, was the first such problem involving the enumeration of LEGO structures in the literature.
The history of this problem, and its definition, is given in the Introduction. The first 10 coefficients are given in the OEIS as sequence A112389, where the coefficients at order 9 and 10 were found by Johan Nilsson and Matthias Simon respectively.

We have attempted to extend the sequence approximately, as discussed above for the $w \times 1$ structures. Unfortunately, we cannot do nearly as well, as these series are a bit too short to construct many differential approximants. Nevertheless, we can make some minor progress. Assuming we have only 9 coefficients, we predict the 10th as $8.2058\times 10^{16} \pm 2.3 \times 10^{12},$ which agrees with the exact value 
$8.2057965\ldots \times 10^{16}$ within the quoted error, which is one standard deviation. 

Using all 10 terms, we predict the next three terms in this sequence to be $8.3649 \times 10^{18} \pm 2.4 \times 10^{14},$ $8.6304 \times 10^{20} \pm 7.0 \times 10^{16},$ and $8.994 \times 10^{22} \pm 2.1 \times 10^{19}.$ The errors quoted are 1 standard deviation, so for full confidence as bounds, they should probably be tripled.

We have analysed this sequence in a manner similar to the preceding analyses, using two further approximate coefficients.

We estimate the growth constant to be $\mu = 117.25 \pm 0.05.$ This is in excellent agreement with the Monte Carlo estimate
of Eilers \cite{E16, DE05} of $\mu \approx 117.$ Both these estimates are consistent with the rather wide bounds found by Eilers, $81 < \mu < 177.$ 

Assuming log-convexity, which is well-supported by the data, one obtains the (non-rigorous) lower bound $\mu >100.47.$ 
For the critical exponent we find $g \approx -3/2,$ and conjecture equality, as tentatively observed previously by Abrahamsen and Eilers \cite{AE11}.

So we estimate that the coefficients behave as $$a_n \sim A\mu^n \cdot n^{-3/2},$$
where $\mu=117.25 \pm 0.05,$ and $A \approx 0.00551.$ It follows that the generating function behaves as 
$$A(x)=\sum_n a_n x^n \sim C\sqrt{1-\mu x},$$ where $C = A\cdot \Gamma(-1/2)=-2\sqrt{\pi}A \approx -0.0195.$
\section{Conclusion}

We have studied several different LEGO enumeration problems and conjecture that, in every case, the number of such objects of size $n$ tiles grows as do most such enumeration problems, with exponential growth and a sub-dominant power law term. More precisely, the coefficients behave as $$a_n \sim A \mu^n n^g,$$ and in the tables above we summarise our estimates of the critical parameters.
As is frequently the case, we find that there is an element of universality, in that the critical exponent appears to depend only on the dimension of the problem. So the various planar problems all have exponent $g=-1,$ while the three-dimensional problem has exponent $g=-3/2.$

We have given non-rigorous lower bounds for all the growth constants, based on the unproven assumption of log-convexity of the coefficients. Note that a stronger condition than log-convexity is that the coefficients can be expressed as the moments of a Stieltjes series. In such cases one has more powerful methods to find the lower bounds. However we have tested the available data and found that, after a sufficient number of terms, the Hankel determinants become negative, thus ruling out the possibility that these sequences are Stieltjes moment sequences.

As well as analysing the sequences that arise when fixing the tile size and varying the number of tiles, we have also constructed the polynomials that arise when we fix the number of tiles $n$ and vary the tile size instead. We have proved that this polynomial structure persists for arbitrary tile size.

In this way we were able to construct polynomials for $n \le 14.$ Note that the knowledge of such polynomials allows one to generate the first 14 terms in the series for {\em any} tile size. 
\section{Acknowledgements}
We would like to thank Andrew Conway, who was involved in very helpful discussions and provision of some test data, and also a computer program that drew some invaluable pictures.  Michal Adamaszek first informed us that the existence of polynomial structures for all tile sizes can be proved by Ehrhart theory, and that the symmetry relation of theorem \ref{conj} can be similarly proved. RMN would like to thank his thesis supervisors S{\o}ren Eilers and Michal Adamaszek for their inspiring supervision of his Master's thesis.

\section{Appendix A}
In Tables \ref{tab:1a}, \ref{tab:1b}, and \ref{tab:1c}  we give the coefficients generated for $w \times 1$ LEGO structures for $1 < w < 11.$  Note that the solution for $w=1$ is trivial, as every coefficient is 1, so that the generating function is $\frac{1}{1-x}.$
\begin{table}
\begin{tabular}{ll}
\hline
    $2 \times 1$ & $3 \times 1$\\
    \hline
     1    & 1  \\
      3   &  5 \\
       11  & 31  \\
         44&210   \\
  186       &1506  \\
  814       &11190  \\
  3656      &85357  \\
16731      & 663539 \\
 77705 & 5235327 \\
 365095 & 41790755 \\
 1731797  & 336792083 \\
 8279363  & 2735667997 \\
 39845689  & 22369382984  \\
 192852359  & 183953554889\\
 937986507  & 1520171511036\\
 4581678031  & 12616393722193 \\
 22464030959  & 105102766371328 \\
 110509938701   & 878505513933208 \\
 545269104263  & 7364975984121163  \\
 2697646445713  & 61910345691875194 \\
 13378627003520  & 521684370818376093 \\
 66495716465315  &  \\
 331167284581601  &  \\
 1652340114446553  &  \\
 8258197397705302  &  \\
 41337852343827210  &  \\
 207222462319935608  &  \\
 1040176220193951923  &  \\
 5227785863956950802 &  \\
    \end{tabular}%
    \begin{tabular}{ l @{} }
    \hline
    $4 \times 1$ \\
    \hline
    1\\
 7\\
 61\\
 581\\
 5861\\
 61271\\
 657614\\
 7193219\\
 79860559\\
 897028231\\
 10172479559\\
 116270460336\\
 1337832524346\\
 15480979135090\\
 180022037747385\\
 2102381303716934\\
 24645280048119407\\
 289872904035637011\\
 \\
 \\
 \\
 \\
 \\
 \\
 \\
 \\
 \\
 \\
 \\
\end{tabular}%
   \begin{tabular}{ l @{} }
    \hline
    $5 \times 1$ \\
    \hline
1\\
 9\\
 101\\
 1239\\
 16101\\
 216849\\
 2998512\\
 42256845\\
 604432145\\
 8747114649\\
 127799631123\\
 1881988447984\\
 27899365888831\\
 415945374759428\\
 6231734780546397\\
 93764902478331859\\
\\
\\
 \\
 \\
 \\
 \\
 \\
 \\
 \\
 \\
 \\
 \\
 \\
\end{tabular}
 \caption{Counts, $w= 2 \cdots 5$.}
    \label{tab:1a}
\end{table}
\begin{table}
\begin{tabular}{l}
\hline
    $6 \times 1$ \\
    \hline
1\\
 11\\
 151\\
 2266\\
 36026\\
 593626\\
 10042895\\
 173161417\\
 3030425857\\
 53656702121\\
 959162221383\\
 17281565000365\\
 313447065417759\\
 5717551221539989\\
 104806127257244312\\
    \end{tabular}%
    \begin{tabular}{ l @{} }
    \hline
    $ 7 \times 1$ \\
    \hline
    1\\
 13\\
 211\\
 3744\\
 70386\\
 1371474\\
 27437278\\
 559425841\\
 11577238011\\
 242401744741\\
 5124062727409\\
 109173250638699\\
 2341575901296882\\
 50508564635618501\\
\\
\end{tabular}%
   \begin{tabular}{ l @{} }
    \hline
    $8 \times 1$ \\
    \hline
1\\
 15\\
 281\\
 5755\\
 124881\\
 2808695\\
 64858487\\
 1526434767\\
 36462882707\\
 881236421615\\
 21502149574037\\
 528803772805851\\
 13091742116167208\\
 325960647568001867\\
\\
\end{tabular}%
 \begin{tabular}{ l @{} }
    \hline
    $9 \times 1$ \\
    \hline
1\\
 17\\
 361\\
 8381\\
 206161\\
 5256281\\
 137596027\\
 3670989809\\
 99408158023\\
 2723515113137\\
 75333060413645\\
 2100219743338293\\
 58943255239894358\\
 1663674497216953661\\
\\
\end{tabular}
 \caption{Counts, $w= 6 \cdots 9$.}
    \label{tab:1b}
\end{table}

\begin{table}
\begin{tabular}{l}
\hline
    $10 \times 1$  \\
    \hline
1\\
 19\\
 451\\
 11704\\
 321826\\
 9172174\\
 268398178\\
 8004557671\\
 242301915439\\
 7420709506579\\
 229446856339449\\
 7150594512744185\\
 224332637060251284\\
 7077952677585338683\\

\end{tabular}%

 \caption{Counts, $w=10.$ }
    \label{tab:1c}
\end{table}
 \newpage
\section{Appendix B}
\subsection{Polynomials for interlocking $n \times 1$ structures.}\label{sec:polylist}

\begin{tiny}
\begin{equation}
1
\end{equation}
\begin{equation}
2 w-1
\end{equation}
\begin{equation}
5 w^{2}-5 w+1
\end{equation}
\begin{equation}
\frac{41}{3} w^{3}-\frac{41}{2} w^{2}+\frac{53}{6} w -1
\end{equation}
\begin{equation}
\frac{475}{12} w^{4}-\frac{475}{6} w^{3}+\frac{635}{12} w^{2}-\frac{40}{3} w+1
\end{equation}
\begin{equation}
\frac{713}{6} w^{5}-\frac{3565}{12} w^{4}+\frac{811}{3} w^{3}-\frac{1301}{12} w^{2}+\frac{55}{3} w-1
\end{equation}
\begin{equation}
\frac{16483}{45} w^{6}-\frac{16483}{15} w^{5}+\frac{91309}{72} w^{4}-\frac{8459}{12} w^{3}+\frac{69491}{360} w^{2}-\frac{1423}{60} w+1
\end{equation}
\begin{equation}
\frac{1449881}{1260} w^{7}-\frac{1449881}{360} w^{6}+\frac{2029483}{360} w^{5}-\frac{289801}{72} w^{4}+\frac{557479}{360} w^{3}-\frac{56077}{180} w^{2}+\frac{3076}{105} w-1
\end{equation}
\begin{equation}
\frac{73969523}{20160} w^{8}-\frac{73969523}{5040} w^{7}+\frac{6959401}{288} w^{6}-\frac{7605373}{360} w^{5}+\frac{30387061}{2880} w^{4}-\frac{2170541}{720} w^{3}+\frac{470965}{1008} w^{2}-\frac{14629}{420} w+1
\end{equation}
\begin{equation}
\frac{89485829}{7560} w^{9}-\frac{89485829}{1680} w^{8}+\frac{25421659}{252} w^{7}-\frac{2090137}{20} w^{6}+\frac{2899702}{45} w^{5}
-\frac{5787193}{240} w^{4}+\frac{8094845}{1512} w^{3}-\frac{46233}{70} w^{2}+\frac{842}{21} w-1
\end{equation}
\begin{multline}
\frac{17487347663}{453600} w^{10}-\frac{17487347663}{90720} w^{9}+\frac{1248502385}{3024} w^{8}-\frac{7482700037}{15120} w^{7}+\frac{7861487219}{21600} w^{6}\\
-\frac{731661827}{4320} w^{5}+\frac{225142945}{4536} w^{4}-\frac{200439697}{22680} w^{3}+\frac{5598629}{6300} w^{2}-\frac{5641}{126} w+1
\end{multline}
\begin{multline}
\frac{90211227173}{712800} w^{11}-\frac{90211227173}{129600} w^{10}+\frac{60372859255}{36288} w^{9}-\frac{137057149307}{60480} w^{8}+\frac{146560866553}{75600} w^{7}\\
-\frac{46757432153}{43200} w^{6}+\frac{10290244517}{25920} w^{5}-\frac{2435385197}{25920} w^{4}+\frac{3112726853}{226800} w^{3}-\frac{28814141}{25200} w^{2}+\frac{167581}{3465} w-1
\end{multline}
\begin{multline}
\frac{1300892708443}{3110400} w^{12}-\frac{1300892708443}{518400} w^{11}+\frac{20596152314399}{3110400} w^{10}-\frac{1047722086921}{103680} w^{9}\\
+\frac{71609141325941}{7257600} w^{8}-\frac{7788460982861}{1209600} w^{7}+\frac{8860861426997}{3110400} w^{6}-\frac{17603330867}{20736} w^{5}\\
+\frac{64511466371}{388800} w^{4}-\frac{2619494047}{129600} w^{3}+\frac{213101387}{151200} w^{2}-\frac{126821}{2520} w+1
\end{multline}
\begin{multline}
\frac{2163928272479803}{1556755200} w^{13}-\frac{2163928272479803}{239500800} w^{12}+\frac{316983608777}{12150} w^{11}-\frac{960262175626309}{21772800} w^{10}\\+
\frac{43931020425919}{907200} w^{9}-\frac{262825432507213}{7257600} w^{8}+\frac{25598928894827}{1360800} w^{7}-\frac{147962647433887}{21772800} w^{6}\\
+\frac{2615911547453}{1555200} w^{5}-\frac{1502460928697}{5443200} w^{4}+\frac{94320295367}{3326400} w^{3}-\frac{690785069}{415800} w^{2}+\frac{6511}{130} w-1
\end{multline}
\end{tiny}


\newpage
\begin{thebibliography}{10}



\bibitem{AE11} M Abrahamsen and S Eilers, {\em Experimental Mathematics}, {\bf 20} (2) 144-152, (2011).

\bibitem{BR} M Beck and S Robins, {\em Computing the continuous discretely}, (Springer, 2nd ed. 2015), Thm 4.1.

\bibitem{C74}  J K Christiansen, {\em Taljonglering med klosder-eller talrige klodser}, Klodshans (LEGO company newsletter), (1974).

\bibitem{DE05} B Durhuus and S Eilers, {\em On the entropy of LEGO}, arXiv:/math/0504039, (2005), J. Appl. Math. Comput. {\bf 45} 433-448, (2014).

\bibitem{E16} S Eilers, {\em The LEGO counting problem }, The American Mathematical monthly, {\bf 123} (5) 415--426 (2016).

\bibitem{F23} M. Fekete, {\em \"Uber die Verteilung der Wurzeln bei gewissen algebraischen
Gleichungen mit ganzzahligen Koeffizienten}, Math. Zeit. {\bf 17}
 228--249  (1923).
\bibitem{G09} A J Guttmann,
  \emph{Polygons, Polyominoes and Polycubes} Lecture Notes in Physics {\bf 775}, ed. A J Guttmann, Springer, (Heidelberg), (2009).

\bibitem{G16} A J Guttmann,
{\em Series extension: predicting approximate series coefficients from a finite number of exact coefficients},
J. Phys. A: Math. Theor. {\bf 49}  415002 (27pp), (2016).

\bibitem{EJ09} Ian G. Enting and Iwan Jensen,
{\em Exact Enumerations},
Polygons, Polyominoes and Polycubes, ed. A J Guttmann, Springer (143-180pp) (2009).

\bibitem{DE10}
  B Durhuus and S Eilers,
  {\em Enumeration of pyramids of one-dimensional pieces of arbitrary fixed integer length},
  Discrete Mathematics and Computer Science proc. (145-160pp) (2010)

 \bibitem{N16} R M Nilsson, {\em On the number of flat LEGO structures}, Master's thesis in Mathematics, University of Copenhagen, (2016).

 \bibitem{OEIS}  {\em The On-Line Encyclopaedia of Integer Sequences}, OEIS Foundation Inc. (2014), https://oeis.org
\bibitem{S74} R P Stanley, {\em Combinatorial reciprocity theorems}, Adv. Math., {\bf 14} 194-253, (1974).
\end{thebibliography}
\end{document}